\newtheorem{lemma}{Lemma}
\newtheorem{theorem}[lemma]{Theorem}
\newtheorem{corollary}[lemma]{Corollary}
\newtheorem{definition}[lemma]{Definition}
\newtheorem{remark}[lemma]{Remark}
\begin{document}

\title{$E$ Type Singularities for Sufficiently Smooth Functions}

\author{\firstname{Ibrokhimbek}~\surname{Akramov}}

\email{i.akramov1@gmail.com}
\affiliation{%
Silk Road International University of Tourism and Cultural Heritage, Samarkand,
Uzbekistan
}%
\author{\firstname{Dildora}~\surname{Ikromova}}
\email{dikromova@mail.ru}
\affiliation{%
Department of Mathematics,
    Samarkand State University, Samarkand,
Uzbekistan
}%


\begin{abstract}
In this paper, we will consider $E$-type singularities which are Arnol'd type. We provide invariant conditions for a sufficiently smooth functions to have singularities of type $E_k (6\le k\le 8)$.   We show the functions can be reduced to $E_k, k=6, 7, 8$ type normal form under some certain conditions. Moreover, we show that result on normal form for  sufficiently smooth functions can be showed by the use of Implicit Function Theorem.  The results can be utilized to investigate oscillatory integrals with sufficiently smooth phase function.
\end{abstract}
\maketitle
{\bf Keywords:} $E$ type singularities, Fourier restriction, oscillatory integrals
\section{Introduction}

In this paper, we consider the functions $\phi$ having simple singularities. It is well established that \cite{agvMN82} the smooth functions having critical points with the multiplicity less than $8$ have only simple singularities.  To be more precise, for sufficiently smooth functions the notion of multiplicity cannot be defined by corresponding local algebra. However, a concept of  \textit{sufficient jet} of sufficiently smooth functions can be defined.

We give invariant definitions of singularities of type $E$ for sufficiently smooth functions having a critical point at the origin.
Moreover, we will provide some information about $E_k (6\le k\le 8)$-type singularities. The singularities in this case are of the Arnol'd type. Besides, one needs a classification of singularities, up to a linear transformation of variables. Because of some problems in harmonic analysis, such as the Fourier restriction problem and estimates for maximal operators related to hypersurfaces, as well as the summation of the Fourier transformation of surface-carried measures, which are invariant under a linear change of variables, (see \cite{IMmon16}, \cite{Imatzam} and also \cite{AkramovaDI20},  \cite{Imatzam}, \cite{tran}). As we shall see, in a linearly transformed coordinate system, the function has a special, simple form. However, this form contains four arbitrary, sufficiently smooth functions.  Nevertheless, the form is enough to investigate the behavior of the oscillatory integrals with this phase function.

 Analogical results could be obtained for a sufficiently smooth function. Then the results will be applied to investigate oscillatory integrals with sufficiently smooth functions. Such kind of oscillatory integrals arise in many mathematical-physics problems.

The paper is structured in the following order. 
In the next section, we present preliminary results on sufficiently smooth functions with singularities. The aim of Section 3 is to introduce invariant definitions for functions with $E_6$-type singularities. Section 4 focuses on smooth functions with $E_7$-type singularities, and we conclude with the theorem on the normal form  of sufficiently smooth functions having $E_8$ type of singularities.

\section{Preliminaries}

Suppose that $p(x_1, x_2)$ is a homogeneous polynomial of degree $k$, where $k \geq 0$.  We denote by $m_{S^1}(p_k)$ the largest order of zeros of a non-zero homogeneous polynomial $p_k$, where $S^1$ is the unit circle centered at the origin. For example, if $p_2(x_1, x_2) = x_1^2 - x_2^2$, then $m_{S^1}(p_2)=1$. If $p_2(x_1, x_2) = x_1^2 + x_2^2$, then $m_{S^1}(p_2)=0$. Whereas $m_{S^1}(p_2)=2$ for $p_2 = (x_1 - x_2)^2$.

Furthermore, we assume that $\phi(x_1, x_2)$ is a sufficiently smooth function and the condition $D^\alpha\phi(0) = 0$ is satisfied for any multi-index $\alpha = (\alpha_1, \alpha_2)\in \mathbb{Z}_+^2$  such that $|\alpha|:= \alpha_1 + \alpha_2 \leq 2$, where
\begin{eqnarray*}
\mathbb{Z}_+:= \mathbb{N}\cup \{0\}, \quad D^\alpha\phi(x)=\frac{\partial^{|\alpha|}\phi(x)}{\partial x_1^{\alpha_1}\partial x_2^{\alpha_2}}.
\end{eqnarray*}

Recall that any critical point of the continuously differentiable function $\phi$ is called a singular point of the function.
So the origin is a singular point of the function $\phi$. Certainly, any singular point can be moved to the origin with a simple translation.
Furthermore, we use the notation $C^k(U)$ for the set of all functions $\phi$ with domain $U$ having continuous mixed derivatives $D^\alpha\phi$ for any multi-index  $\alpha\in \mathbb{Z}_+^2 $ satisfying $|\alpha|\le k$.

Furthermore, we need to introduce a notion of multiplicity of the singularity, i.e. multiplicity of a critical point. The concept of the multiplicity of a critical point can take many different forms. Here, we define it by the local algebra of \textit{smooth germs}, so it is based on concepts of algebraic geometry.
To do this, we need some concepts from algebraic geometry (see \cite{agvMN82}).  Suppose $\phi$ is a $C^{\infty}$ function defined in a neighborhood of the origin of $\mathbb{R}^2$ and $\phi(0)=0,\,\nabla\phi(0)=0$. 
Naturally, these concepts can be formulated in the context of $\mathbb{R}^n$ or for the algebra of smooth functions defined on a finite-dimensional smooth manifold. To focus our discussion, we will limit ourselves to the two-dimensional scenario. Consequently, we will introduce some essential concepts from the algebra of functions that depend on two variables.
\begin{definition}\label{gradid}
The subring $I_{\nabla\phi} := \partial_1 \phi C^\infty(\mathbb{R}^2) + \partial_2 \phi C^\infty (\mathbb{R}^2)$, where $C^\infty(\mathbb{R}^2)$ is the algebra of equivalence classes of $C^\infty$ functions defined in a neighborhood of the origin, is called a gradient ideal.
 \end{definition}
Recall that two smooth functions, $f_1$ and $f_2$, defined in a neighborhood of the origin, are said to be equivalent if they agree in a neighborhood of the origin.  The set of all functions that are smooth and equivalent to a given function is called its equivalence class. These equivalence classes constitute a germ, or collection, of representatives.

\begin{definition}\label{localalg}
A local algebra of singularity is called to be the following quotient algebra:
\begin{eqnarray*}
 Q_{\phi}:=C^{\infty}(\mathbb{R}^2)/I_{\nabla\phi}.
 \end{eqnarray*}
 \end{definition}
 So, the  local algebra of singularity is an $\mathbb{R}-$ module or a linear space over field $\mathbb{R}$.
 \begin{definition}\label{multip}
 If this quotient  algebra has a finite $\mathbb{R}-$module structure, that is, a finite linear space with some dimension, then the number $dim_{\mathbb{R}}Q_{\phi}=k$ is called to be a multiplicity of the critical point $x = 0$ and it is said to be  a critical point with finite multiplicity. Otherwise, the critical point is called to be a  critical point of infinite multiplicity.
 \end{definition}
 For example, if $\phi(x_1,x_2)=x_1^2$, that is, it does not depend on the variable $x_2$, then $x = 0$ is the  critical point of infinite multiplicity.
 In fact, if the critical point is not isolated i.e. in any neighborhood of the critical point there exists another critical point different from that then multiplicity of that critical point is infinite. For  analytic functions in $\mathbb{C}^n$ the converse statement also holds. So, for a complex analytic function, the critical point has finite multiplicity if and only if the critical point is isolated \cite{agvMN82}. But, for real analytic function the converse statement does not hold, for example $\phi(x_1, x_2)= (x_1^2+x_2^2)^2$. It is easy to see that $(0, 0)$ is the isolated critical point. But, multiplicity of the critical point is infinite.
  Indeed, $p(x_1) \not\subset I_{\nabla\phi}$, for any non-trivial  polynomial $p$. Consequently, the system $\{1, x_1, \dots x_1^n\}$ is a linearly independent system of the local algebra $Q_{\phi}$. Hence $dim_{\mathbb{R}}Q_{\phi}=\infty$.

  It follows from the definition of the gradient ideal that for any $F:(\mathbb{R}^2,0)\mapsto(\mathbb{R}^2,0)$ (e.g $F$ is a smooth diffeomorph  map with $F(0)=0$)  the equations
$$
I_{\nabla\phi}=I_{\nabla(\phi\circ F)},\quad Q_{\phi}=Q_{\phi\circ F}
$$
are valid.

The multiplicity of a critical point does not change under a diffeomorphism. In particular, for smooth functions defined on a smooth manifold, the multiplicity of a critical point  does not depend on the choice of local coordinate system. Therefore, the notion of multiplicity is well-defined for finite-dimensional smooth manifolds.

\begin{definition}\label{defe7} (see \cite{agvMN82})
Let $\phi$ be a sufficiently smooth function defined in a neighborhood of the origin that satisfies the conditions: $\phi(0) = 0$ and the gradient of $\phi$, denoted by $\nabla \phi$, is zero at $0$.
The point $x = 0$ is said to be an $E_k (6\le k\le 8)$ type singularity if there exists a smooth map $\varphi \colon U \to \mathbb{R}^2$, 
such that $\varphi (U) \subseteq Dom(\phi)$, (where $U$ is a small neighborhood of $0$ and $Dom(\phi)$ is the domain of the function $\phi$), satisfying the conditions:
\begin{enumerate}
\item $\varphi (0) = 0$;
\item the Jacobian determinant of $\varphi$ at 0 is non-zero;
\item the following relation holds:
\begin{enumerate}
\item  $\quad \phi (\varphi_1 (y_1 , y_2) , \varphi_2 (y_1 , y_2 )) = \pm y^4_1 + y_2^3, \quad E_6\,$ type singularity;
\item $\quad \phi (\varphi_1 (y_1 , y_2) , \varphi_2 (y_1 , y_2 )) = y_2 y^3_1 + y_2^3, \quad E_7\,$ type singularity;
\item $\quad  \phi (\varphi_1 (y_1 , y_2) , \varphi_2 (y_1 , y_2 )) =  y^5_1 + y_2^3, \quad E_8\,$ type singularity.
\end{enumerate}
\end{enumerate}
The polynomial $\pm y^4_1 + y_2^3$, $y_2y_1^3 + y_2^3$, respectively  $ y^5_1 + y_2^3 $ is said to be  normal form for $E_6$,  $E_7$ respectively $E_8$ -type singularities.
\end{definition}

 If $\phi$ is a  $C^\infty$ smooth function defined in a neighborhood of the origin then we can see from the above, the multiplicity of the critical point is of the function $\phi$ having $E_k$ type singularity is $k=6, 7, 8$. Therefore, it is denoted by $E_k (6\le k\le 8)$, which means that the index of $E_k$ indicates the multiplicity of the critical point.

Although,  we cannot define a notion of the multiplicity of a critical point for sufficiently smooth functions.

If  we will restrict ourselves to non-degenerate linear transformations of variables. In this case, the normal forms will certainly contain smooth functions.

\begin{lemma}\label{eqsol}
Let $\phi$ 
be a $C^3(U)$ function in a neighborhood $U$ of the origin satisfying
\begin{eqnarray*}
\partial_2^2\phi(0, 0)=\partial_1\partial_2^2\phi(0, 0)=0, \quad \partial_2^3\phi(0, 0)\neq0.
\end{eqnarray*}
Then there exists a neighborhood $V\subset U$ and a $C^1$ function $\psi$ defined in $V$ such that $\psi(0)=\psi'(0)=0$ and 
\begin{eqnarray*}
\phi(x_1, x_2)=\phi(x_1, \psi(x_1))+\partial_{x_2}\phi(x_1, \psi(x_1))(x_2-\psi(x_1))+B(x_1, x_2)(x_2-\psi(x_1))^3,
\end{eqnarray*}
where $B$ is a continuous function with $B(0, 0)\neq0$.
\end{lemma}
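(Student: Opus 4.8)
The plan is to construct $\psi$ as the solution curve of the equation $\partial_2^2\phi = 0$ and then to read off the decomposition from the second-order Taylor expansion of $\phi$ in the $x_2$ variable about the point $(x_1,\psi(x_1))$. Concretely, I would set $g(x_1,x_2) := \partial_2^2\phi(x_1,x_2)$. Since $\phi\in C^3(U)$, the function $g$ is $C^1$ on $U$, and the hypotheses give $g(0,0)=0$ together with $\partial_2 g(0,0)=\partial_2^3\phi(0,0)\neq 0$. The Implicit Function Theorem then yields a neighborhood $V$ of the origin and a $C^1$ function $\psi$ of $x_1$ with $\psi(0)=0$ such that $\partial_2^2\phi(x_1,\psi(x_1))=0$ for all $x_1$ with $(x_1,x_2)\in V$.

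Next I would verify the normalization $\psi'(0)=0$, which is precisely where the hypothesis $\partial_1\partial_2^2\phi(0,0)=0$ enters. Differentiating the identity $g(x_1,\psi(x_1))=0$ gives $\partial_1 g(x_1,\psi(x_1))+\partial_2 g(x_1,\psi(x_1))\psi'(x_1)=0$; evaluating at $x_1=0$ and using $\partial_1 g(0,0)=\partial_1\partial_2^2\phi(0,0)=0$ together with $\partial_2 g(0,0)\neq 0$ forces $\psi'(0)=0$.

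Finally, fixing $x_1$ and expanding $\phi(x_1,\cdot)$ to second order about $x_2=\psi(x_1)$ with the integral form of the remainder (legitimate because $\partial_2^3\phi$ is continuous), I obtain
\[
\phi(x_1,x_2)=\phi(x_1,\psi(x_1))+\partial_{x_2}\phi(x_1,\psi(x_1))(x_2-\psi(x_1))+\tfrac12\partial_2^2\phi(x_1,\psi(x_1))(x_2-\psi(x_1))^2+B(x_1,x_2)(x_2-\psi(x_1))^3,
\]
where
\[
B(x_1,x_2):=\int_0^1\frac{(1-t)^2}{2}\,\partial_2^3\phi\bigl(x_1,\psi(x_1)+t(x_2-\psi(x_1))\bigr)\,dt .
\]
The quadratic term vanishes identically on $V$ by the construction of $\psi$, leaving exactly the claimed form. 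Continuity of $B$ follows from the continuity of $\partial_2^3\phi$ and of $\psi$, and evaluating at the origin gives $B(0,0)=\tfrac16\partial_2^3\phi(0,0)\neq 0$.

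The argument is essentially routine once the right object $g=\partial_2^2\phi$ is singled out, and the only point demanding care is the bookkeeping of regularity. Because $\phi$ is merely $C^3$, one must check that $g$ is genuinely $C^1$ (so that the Implicit Function Theorem applies and $\psi$ is $C^1$) and that the cubic coefficient $B$, built from $\partial_2^3\phi$, is only guaranteed to be continuous — which matches exactly the conclusion claimed for $B$. I therefore expect no substantive obstacle beyond writing the Taylor remainder in the integral form so that $B$ transparently inherits continuity and the nonzero value at the origin.
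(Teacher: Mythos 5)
Your proposal is correct and follows essentially the same route as the paper, which proves the lemma in one line by invoking the Implicit Function Theorem for $\partial_2^2\phi=0$ together with Taylor's formula for $C^3$ functions (citing H\"ormander); your writeup simply supplies the details — the $C^1$ regularity of $g=\partial_2^2\phi$, the computation forcing $\psi'(0)=0$, and the integral-remainder form giving $B$ continuous with $B(0,0)=\tfrac16\partial_2^3\phi(0,0)\neq0$. The only point left implicit is that $V$ should be taken as a small rectangle so the vertical segment from $(x_1,\psi(x_1))$ to $(x_1,x_2)$ stays in the domain, which is routine.
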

\begin{proof}
A proof of Lemma \ref{eqsol} follows from the implicit function theorem and Taylor formula for $C^3$  function (see \cite{hor89} page no. 21).

If $\phi$ is a $C^\infty$ function satisfying the conditions of Lemma \ref{eqsol} then  according to the  implicit function theorem, the equation $\partial_2^2\phi(x_1, x_2)=0$  has a unique $C^\infty$ smooth solution $x_2=\psi(x_1)$  in sufficiently small neighborhood of the point $(0,0)$ in the form
 $$\psi(x_1)=x_1^m\omega(x_2),$$
 where $m\geq2$ and $\omega$ is a $C^\infty$ smooth function. We will assume $\omega(0)\not=0$, unless  the solution $\psi$ is  a flat function at the origin.  Otherwise  we can take the number $m\geq2$ as large as we wish and again $\omega$ is smooth, in addition, still the condition $\omega(0)=0$ is satisfied.
\end{proof}

\begin{corollary}\label{corolimp}
Let $\phi\in C^n(U) (n\ge3)$ be a function, where $U$ is a neighborhood of the origin, satisfying
\begin{eqnarray*}
\partial_2^2\phi(0, 0)=\partial_1\partial_2^2\phi(0, 0)=0, \quad \partial_2^3\phi(0, 0)\neq0.
\end{eqnarray*}
Then there exists a neighborhood $(-\delta_1, \delta_1)\times (\delta_2, \delta_2) \subset U$ and a $C^{n-2}$ function $\psi: (-\delta_1, \delta_2)\mapsto (-\delta_2, \delta_2)$ (where $\delta_1, \delta_2$ are sufficiently small fixed positive real numbers) such that $\psi(0)=\psi'(0)=0$ and  the following relation holds
\begin{eqnarray*}
\phi(x_1, x_2)=\phi(x_1, \psi(x_1))+\partial_{x_2}\phi(x_1, \psi(x_1))(x_2-\psi(x_1))+B(x_1, x_2)(x_2-\psi(x_1))^3,
\end{eqnarray*}
where $B\in C^{n-3}((-\delta_1, \delta_1)\times (\delta_2, \delta_2))$ and  $B(0, 0)\neq0$.
\end{corollary}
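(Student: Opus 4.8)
The plan is to mirror the proof of Lemma~\ref{eqsol}, but to track carefully how the finite differentiability class $C^n$ propagates through the Implicit Function Theorem and the Taylor remainder.

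First I would set $g(x_1, x_2) := \partial_2^2\phi(x_1, x_2)$. Since $\phi \in C^n(U)$ with $n \geq 3$, the function $g$ belongs to $C^{n-2}(U)$, and the hypotheses give $g(0,0) = 0$ together with $\partial_{x_2} g(0,0) = \partial_2^3\phi(0,0) \neq 0$. The Implicit Function Theorem (in its $C^{n-2}$ form) then produces fixed positive numbers $\delta_1, \delta_2$ and a unique function $\psi \in C^{n-2}((-\delta_1, \delta_1))$ with $\psi(0) = 0$, $\psi((-\delta_1,\delta_1)) \subseteq (-\delta_2, \delta_2)$, and $g(x_1, \psi(x_1)) = \partial_2^2\phi(x_1, \psi(x_1)) = 0$ on $(-\delta_1, \delta_1)$. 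Differentiating this identity at $x_1 = 0$ and using $\partial_1\partial_2^2\phi(0,0) = 0$ yields $\psi'(0) = -\partial_1 g(0,0)/\partial_2 g(0,0) = 0$, which gives the two boundary conditions $\psi(0) = \psi'(0) = 0$.

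Next, shrinking $\delta_1, \delta_2$ if necessary so that the whole segment joining $(x_1, \psi(x_1))$ to $(x_1, x_2)$ lies in $U$, I would Taylor expand $\phi$ in the second variable around the curve $x_2 = \psi(x_1)$. Writing $t = x_2 - \psi(x_1)$ and $F(x_1, t) := \phi(x_1, \psi(x_1) + t)$, the Taylor formula with integral remainder gives
\begin{equation*}
F(x_1, t) = F(x_1, 0) + \partial_t F(x_1, 0)\, t + \tfrac12 \partial_t^2 F(x_1, 0)\, t^2 + t^3\!\int_0^1 \tfrac{(1-u)^2}{2}\,\partial_2^3\phi\bigl(x_1, \psi(x_1) + tu\bigr)\, du.
\end{equation*}
The decisive point is that the quadratic coefficient is $\partial_t^2 F(x_1, 0) = \partial_2^2\phi(x_1, \psi(x_1)) = 0$ by the very construction of $\psi$, so the $t^2$ term disappears. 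Translating back with $F(x_1, 0) = \phi(x_1, \psi(x_1))$ and $\partial_t F(x_1, 0) = \partial_{x_2}\phi(x_1, \psi(x_1))$ reproduces the claimed identity with
\begin{equation*}
B(x_1, x_2) := \int_0^1 \tfrac{(1-u)^2}{2}\,\partial_2^3\phi\bigl(x_1, \psi(x_1) + (x_2 - \psi(x_1))u\bigr)\, du.
\end{equation*}

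Finally I would verify the two remaining assertions. Setting $x_1 = x_2 = 0$ and using $\psi(0) = 0$ collapses the integrand to the constant $\partial_2^3\phi(0,0)$, so $B(0,0) = \partial_2^3\phi(0,0)\int_0^1 \tfrac{(1-u)^2}{2}\,du = \partial_2^3\phi(0,0)/6 \neq 0$. For the regularity, the integrand is a composition of $\partial_2^3\phi \in C^{n-3}$ with the map $(x_1, x_2, u) \mapsto (x_1, \psi(x_1) + (x_2 - \psi(x_1))u)$, whose components lie in $C^{n-2} \subseteq C^{n-3}$; differentiating under the integral sign then shows $B \in C^{n-3}((-\delta_1,\delta_1)\times(-\delta_2,\delta_2))$. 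I expect the main obstacle to be precisely this bookkeeping of differentiability orders, namely confirming that the loss of exactly two derivatives in the Implicit Function Theorem step and of three derivatives in the remainder is consistent, rather than any single hard estimate; the vanishing of the quadratic term, which is the analytic heart of the statement, is automatic once $\psi$ is chosen as the zero set of $\partial_2^2\phi$.
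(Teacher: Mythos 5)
Your proposal is correct and follows exactly the route the paper itself indicates: the paper's proof of Lemma~\ref{eqsol} (from which Corollary~\ref{corolimp} is drawn) simply cites the Implicit Function Theorem applied to $\partial_2^2\phi=0$ together with the Taylor formula with remainder, which is precisely what you carry out, with the added value of making the integral remainder, the vanishing of the quadratic term, and the bookkeeping of the classes $C^{n-2}$ for $\psi$ and $C^{n-3}$ for $B$ explicit. No gap; your write-up is a complete elaboration of the argument the paper leaves to the reader.
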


\section{Invariant definition of $E_6$ type singularities}

The purpose of this section is to discuss the criteria for a critical point of the sufficiently smooth function $\phi$  to have the $E_6$ type singularity, where we will state our one of the main results. 

\begin{theorem}\label{e6tsin} Let $\phi$ be a $C^7$ smooth function defined in a neighborhood of the origin, and let the Taylor  expansion of $\phi(x_1, x_2)$ up to order $6$ be given by:
\begin{equation*}
 \phi(x_1, x_2) \sim p_3(x_1, x_2) + p_4(x_1, x_2) + \dots.
 \end{equation*}
Then the following statements are equivalent:
\begin{enumerate}
\item[(i)] The function $\phi$ has $E_6$ type singularities at the origin;
\item[(ii)] The multiplicity of the root of $p_3$ is $3$ i.e. $m_{s^1}(p_3)=3$, and the resultant $Res(p_3, p_4)\not=0$.
\end{enumerate}
Where $Res(p_3, p_4)$ denotes the resultant of polynomials $p_3$ and $p_4$.
\end{theorem}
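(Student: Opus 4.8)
The plan is to prove the two implications separately, the substantive one being $(ii)\Rightarrow(i)$. First I would exploit the hypothesis $m_{S^1}(p_3)=3$: a cubic binary form has a zero of order $3$ on $S^1$ exactly when it is a nonzero cube of a linear form, so $p_3=c\,\ell^3$ with $c\neq0$. A linear change of coordinates brings $\ell$ to $x_2$ and rescales $c$ to $1$, so that $p_3=x_2^3$; then $\partial_2^2\phi(0)=\partial_1\partial_2^2\phi(0)=0$ and $\partial_2^3\phi(0)=6\neq0$, i.e.\ the hypotheses of Corollary~\ref{corolimp} hold. In these coordinates $Res(p_3,p_4)\neq0$ is equivalent to saying that $p_4$ does not vanish at the triple root of $p_3$, i.e.\ the coefficient $a$ of $x_1^4$ in $p_4$ is nonzero. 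A further shear $x_1\mapsto x_1-\tfrac{b}{4a}x_2$ (which fixes $p_3=x_2^3$ and the value of $a$) removes the $x_1^3x_2$ term of $p_4$; this normalization will be crucial below.

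Next I would apply Corollary~\ref{corolimp}, which in the smooth coordinate $\tau:=x_2-\psi(x_1)$ yields
\[
\phi=g(x_1)+h(x_1)\,\tau+B(x_1,x_2)\,\tau^3,\qquad g=\phi(x_1,\psi),\quad h=\partial_2\phi(x_1,\psi),\quad B(0,0)\neq0,
\]
with no $\tau^2$ term. A direct order count using $\psi=O(x_1^2)$ gives $g(x_1)=a\,x_1^4+O(x_1^5)$ and, thanks to the shear, $h(x_1)=O(x_1^4)$. The decisive observation is that the cubic and the quartic can now be separated: setting $y_2:=B^{1/3}\tau$ (a real cube root, legitimate since $B(0,0)\neq0$) turns $B\tau^3$ into $y_2^3$, while
\[
g(x_1)+h(x_1)\tau=x_1^4\,w(x_1,\tau),\qquad w(0,0)=a\neq0,
\]
so $w$ keeps the sign of $a$ near the origin and $y_1:=x_1\,|w(x_1,\tau)|^{1/4}$ gives $g+h\tau=\sgn(a)\,y_1^4$. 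The map $(x_1,\tau)\mapsto(y_1,y_2)$ has diagonal linearization $\mathrm{diag}(|a|^{1/4},B(0,0)^{1/3})$ at the origin, hence nonzero Jacobian, and composing it with the earlier (linear and $C^5$) changes produces the diffeomorphism $\varphi$ required by Definition~\ref{defe7}, with normal form $\pm y_1^4+y_2^3$ and sign $\sgn(a)$.

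I expect the main obstacle to be the linear term $h(x_1)\tau$, which prevents $\phi$ from being a sum of a pure cube and a function of $x_1$ alone. It cannot be disposed of by completing the cube: a shift $\tau\mapsto\tau+s(x_1)$ that cancels it necessarily recreates a $\tau^2$ term and so destroys the structure produced by Corollary~\ref{corolimp}. The resolution is to \emph{absorb} $h\tau$ into the quartic, which succeeds precisely because (a) the shear forces $h=O(x_1^4)$, so that $g+h\tau=x_1^4w$ remains a perfect fourth power, and (b) the resultant hypothesis gives $w(0,0)=a\neq0$, keeping the fourth root $|w|^{1/4}$ smooth and nonvanishing. A secondary delicate point is the bookkeeping of finite smoothness: with $\phi\in C^7$ one has $\psi\in C^5$, $g,h\in C^5$ and $B\in C^4$, whence $g/x_1^4,\,h/x_1^4\in C^1$ and the constructed $\varphi$ is a $C^1$ diffeomorphism — the value $7$ being chosen to leave enough derivatives after the divisions by $x_1^4$ and the extraction of roots.

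Finally, for $(i)\Rightarrow(ii)$ I would argue by invariance. Writing $\phi=\Phi\circ\chi$ with $\Phi=\pm y_1^4+y_2^3$, $\chi=\varphi^{-1}$ a diffeomorphism fixing the origin and $L=d\chi(0)$, the lowest-order part of $\phi$ is $p_3=\ell_2^3$, where $\ell_2$ is the second component of $Lx$; since $L$ is invertible $\ell_2\neq0$, so $p_3$ is a nonzero cube and $m_{S^1}(p_3)=3$. The order-$4$ part is $p_4=\pm\ell_1^4+3\ell_2^2\,q_2$, where $\ell_1$ is the first component of $Lx$ and $q_2$ the quadratic part of $\chi_2$. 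At the triple root $\{\ell_2=0\}$ of $p_3$ the second summand vanishes while $\ell_1$ does not (as $\ell_1,\ell_2$ are independent), so $p_4$ does not vanish there and hence $Res(p_3,p_4)\neq0$. This closes the equivalence.
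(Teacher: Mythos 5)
Your proof of (ii)$\Rightarrow$(i) follows the paper's argument essentially step for step: normalize $p_3=x_2^3$, use the shear to kill the $x_1^3x_2$ coefficient of $p_4$, apply Corollary~\ref{corolimp}, note that $g$ and $h$ vanish to order $4$ so that $g+h\tau=x_1^4 w$ with $w(0,0)\neq 0$, and then pass to $y_1=x_1|w|^{1/4}$, $y_2=B^{1/3}\tau$ with the same diagonal Jacobian computation, yielding the normal form $\pm y_1^4+y_2^3$. Your (i)$\Rightarrow$(ii) argument via the order-$3$ and order-$4$ Taylor parts of $\Phi\circ\chi$ is also correct and supplies the detail where the paper only remarks that this direction is ``easy to prove,'' but the overall route is the same.
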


\begin{remark}
Note that $Res(p_3,p_4)\not=0$ if and only if the homogeneous polynomials $p_3,\, p_4$ have no common zeros out of the origin.
 \end{remark}

\begin{proof}
We prove that (ii) implies (i).

Indeed, by conditions (ii) it follows that the function $\phi$ after a 
potential linear change of variables can be described as
\begin{equation*}
 \phi(x_1, x_2)= x_2^3 + p_4(x_1, x_2) + R(x_1, x_2),
 \end{equation*}
where $R$ is a remainder term, and $p_4(x_1, x_2)=c_0x_1^4+c_1x_1^3x_2+c_2x_1^2x_2^2+c_3x_1x_2^3+c_4x_2^4$.
By our condition (ii) $Res(x_2^3, p_4)=c_0^3\neq0$. Further, we deploy the below linear change of variables:
\begin{equation*}
 y_2= x_2,\quad y_1=x_1 +\frac{c_1}{4c_0} x_2.
 \end{equation*}
and obtain:
\begin{equation*}
p_4\left(y_1-\frac{c_1}{4c_0}y_2, y_2\right)=c_0y_1^4+\tilde c_2y_1^2y_2^2+\tilde c_3y_1y_2^3+\tilde c_4y_2^4.
\end{equation*}

 So the function $\phi$ after a probable linear change of variables can be presented as:
 \begin{eqnarray*}
\phi(x_1, x_2)=x_2^3+c_0x_1^4+c_2x_1^2x_2^2+c_3x_1x_2^3+c_4 x_2^4 +R(x_1, x_2).
\end{eqnarray*}
We replace $y$ with $x$ and $\tilde{c}_j$ with $c_j$, where $j=2,3,4$, to simplify the notation.

Due to Corollary \ref{corolimp} we write the function $\phi$ in the form:
\begin{equation}\label{last6}
\phi(x_1, x_2)=b_0(x_1)+(x_2-\psi(x_1))b_1(x_1)+B(x_1, x_2)(x_2-\psi(x_1))^3,
\end{equation}
where $b_0$ is a $C^7$ function,  $b_1$ is a $C^5$ function, $B$ is a $C^4$ function with $B(0, 0)\neq0$ and $\psi$ is a $C^5$ function with $\psi(0)=\psi'(0)=0$.

The straightforward computations show that:
\begin{eqnarray*}
b_0(0)=b_0'(0)=b_0''(0)=b_0'''(0)=0\quad \mbox{and}\quad b_0^{(iv)}(0)=4!c_0\neq0,
\end{eqnarray*}
and also
\begin{eqnarray*}
b_1(0)=b_1'(0)=b_1''(0)=b_1'''(0)=0.
\end{eqnarray*}
Consequently, we have $b_0(x_1)=x_1^4\tilde b_0(x_1)$ with $\tilde b_0(0)=c_0\neq0$ and  $b_1(x_1)=x_1^4\tilde b_1(x_1)$ with $C^1$ smooth function $\tilde b_1$.
Therefore inserting the last expressions for $b_0$ and $b_1$  to \eqref{last6} we get
\begin{eqnarray*}
\phi(x_1, x_2)=x_1^4\tilde b_0(x_1)+(x_2-\psi(x_1))x_1^4\tilde b_1(x_1)+B(x_1, x_2)(x_2-\psi(x_1))^3,
\end{eqnarray*}

Consider the $C^1$ map defined in a neighborhood of the origin:
\begin{eqnarray*}
y_1=x_1\sqrt[4]{|\tilde b_0(x_1)+(x_2-\psi(x_1))\tilde b_1(x_1)|}\quad  y_2=(x_2-\psi(x_1))\sqrt[3]{B(x_1, x_2)}.
\end{eqnarray*}
Obviously, we have $J(0, 0) = \sqrt[4]{|\tilde b_0(0)|} \sqrt[3]{B(0,0)} \neq 0$, where $J(0, 0)$ is the determinant of the Jacobi matrix at the origin.
By the inverse map theorem in a sufficiently small neighborhood of the origin there exists the inverse $C^1$ map $x_1=\varphi_1(y_1, y_2), x_2=\varphi_2(y_1, y_2)$. Then, obviously, the following relation holds
\begin{eqnarray*}
\phi(\varphi_1(y_1, y_2), \varphi_2(y_1, y_2))=sign(\tilde b_0(0))y_1^4+y_2^3.
\end{eqnarray*}

Note that the relation $(i)\Rightarrow (ii)$ is easy to prove.

\end{proof}

From Theorem \ref{e78tsin} we conclude the following result.

\begin{corollary}
If the $\phi$ is a $C^7$ smooth function having a type $E_6$ singularity at the origin, then it is written by a linear transformation as follows:
$$
\phi(x_1,x_2)=b_3(x_1,x_2)(x_2-x_1^m\omega(x_1))^3+x_2x_1^4b_1(x_1)+x_1^4b_0(x_1)
$$
where $b_3,b_1,b_0,\omega$ are smooth functions, with $b_3(0,0)\neq0,\,b_0(0)\not=0$ and $m\geq2$. 
\end{corollary}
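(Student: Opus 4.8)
The plan is to recognize that this corollary is essentially a repackaging of the intermediate normal form already established inside the proof of Theorem~\ref{e6tsin}, so I would not start from scratch but instead extract and reorganize that computation. Since $\phi$ has an $E_6$ singularity, Theorem~\ref{e6tsin} guarantees condition (ii): $m_{S^1}(p_3)=3$ and $Res(p_3,p_4)\neq0$. The condition $m_{S^1}(p_3)=3$ means the cubic part $p_3$ is, up to a nonzero scalar, a perfect cube $c(\alpha x_1+\beta x_2)^3$, so a single non-degenerate linear change of variables aligns this direction with $x_2$ and rescales the leading term to $x_2^3$; this is precisely the linear transformation referred to in the statement. After it, $\phi(x_1,x_2)=x_2^3+p_4(x_1,x_2)+R(x_1,x_2)$ with $Res(x_2^3,p_4)=c_0^3\neq0$, forcing $c_0\neq0$.

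Next I would apply Corollary~\ref{corolimp} with $n=7$ to split off the cubic factor, writing
$$
\phi(x_1,x_2)=b_0(x_1)+(x_2-\psi(x_1))\,b_1(x_1)+B(x_1,x_2)(x_2-\psi(x_1))^3,
$$
where $B(0,0)\neq0$ and $\psi(0)=\psi'(0)=0$. The core step is then the same derivative bookkeeping as in the proof of Theorem~\ref{e6tsin}: comparing Taylor coefficients shows that $b_0$ and $b_1$ each vanish to order four at the origin, with $b_0^{(iv)}(0)=4!\,c_0\neq0$. Hence a Hadamard/Taylor factorization, valid in finite smoothness, lets me write $b_0(x_1)=x_1^4\tilde b_0(x_1)$ with $\tilde b_0(0)=c_0\neq0$ and $b_1(x_1)=x_1^4\tilde b_1(x_1)$, the factors being as smooth as the $C^7$ hypothesis permits.

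I would handle $\psi$ by the same device: since $\psi(0)=\psi'(0)=0$, Hadamard's lemma gives $\psi(x_1)=x_1^m\omega(x_1)$ for some $m\geq2$ and a sufficiently smooth $\omega$ (for instance $m=2$, $\omega(x_1)=\psi(x_1)/x_1^2$). Substituting these factorizations and expanding the linear-in-$(x_2-\psi)$ term, the only reorganization needed is to split $(x_2-\psi(x_1))\,x_1^4\tilde b_1(x_1)$ into $x_2\,x_1^4\tilde b_1(x_1)$ and the purely $x_1$-dependent piece $-x_1^{m+4}\omega(x_1)\tilde b_1(x_1)$, absorbing the latter into the constant-in-$x_2$ term. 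Setting $b_3:=B$, $b_1:=\tilde b_1$, and $b_0:=\tilde b_0-x_1^m\omega\,\tilde b_1$ yields exactly the asserted form.

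The one point demanding genuine care — the step I would flag as the main obstacle — is verifying that $b_0(0)\neq0$ survives this last reorganization. This is where $m\geq2$ does the real work: the absorbed cross term $x_1^m\omega(x_1)\tilde b_1(x_1)$ vanishes at the origin (indeed to order $\geq m$), so $b_0(0)=\tilde b_0(0)=c_0\neq0$ is preserved while the nonvanishing of $b_3(0,0)=B(0,0)$ comes directly from Corollary~\ref{corolimp}. The remaining subtlety is merely tracking differentiability classes through the Hadamard factorizations, each of which costs a fixed number of derivatives and is comfortably absorbed by the $C^7$ assumption.
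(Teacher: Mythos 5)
Your overall route is the right one --- and it is essentially the paper's: pull the decomposition out of the proof of Theorem~\ref{e6tsin} via Corollary~\ref{corolimp}, factor $b_0$ and $b_1$, write $\psi(x_1)=x_1^m\omega(x_1)$, and absorb the cross term into the $x_1$-only part. However, there is a genuine gap at your ``derivative bookkeeping'' step. After only the linear change that normalizes $p_3$ to $x_2^3$, the quartic part still contains the monomial $c_1x_1^3x_2$, and nothing in the $E_6$ hypothesis forces $c_1=0$: the resultant condition $Res(x_2^3,p_4)=c_0^3\neq0$ constrains only $c_0$. Since the critical curve satisfies $\psi(x_1)=O(x_1^2)$, one computes
\begin{equation*}
b_1(x_1)=\partial_2\phi\bigl(x_1,\psi(x_1)\bigr)=c_1x_1^3+O(x_1^4),
\end{equation*}
so whenever $c_1\neq0$ the coefficient $b_1$ vanishes only to \emph{third} order. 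Your claimed factorization $b_1(x_1)=x_1^4\tilde b_1(x_1)$ then fails, and the middle term of the decomposition comes out as $x_2x_1^3(\cdots)$ rather than the asserted $x_2x_1^4b_1(x_1)$.

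The missing step is precisely the one in the paper's proof of Theorem~\ref{e6tsin} that you skipped: before applying Corollary~\ref{corolimp}, perform the additional shear $y_1=x_1+\frac{c_1}{4c_0}x_2$, $y_2=x_2$ (legitimate because $c_0\neq0$), which eliminates the $x_1^3x_2$ monomial from $p_4$ while leaving $x_2^3$ untouched; the composition of the two linear maps is still the single ``linear transformation'' referred to in the corollary. Once the $x_1^3x_2$ term is gone, $\partial_2\phi(x_1,\psi(x_1))$ really is $O(x_1^4)$, and the rest of your argument --- the finite-smoothness Hadamard factorizations, splitting $(x_2-\psi(x_1))x_1^4\tilde b_1(x_1)$ into $x_2x_1^4\tilde b_1(x_1)$ minus $x_1^{m+4}\omega(x_1)\tilde b_1(x_1)$, and the observation that $m\geq2$ preserves $b_0(0)=\tilde b_0(0)=c_0\neq0$ --- goes through exactly as you wrote it.
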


\section{The case  of $E_7$ type  singularities}

Here we will discuss the criteria for a critical point to be of the $E_7$ type. It is another crucial result of this paper.

\begin{theorem}\label{e78tsin}Let $\phi$ be a $C^9$ smooth function defined in a neighborhood of the origin, and let the Taylor development of $\phi(x_1, x_2)$ up to order $5$ be given by: \begin{equation*} \phi(x_1, x_2)= p_3(x_1, x_2) + p_4(x_1, x_2) + p_5(x_1, x_2)+R(x_1, x_2), \end{equation*}
where $R$ is the remainder term.
Then the following statements are equivalent:
\begin{enumerate}
\item[(i)] The function $\phi$ has type $E_7$ singularities at the origin;
\item[(ii)] The multiplicity of the root of $p_3$ is $3$, and the resultant $Res(p_3, p_4)=0$. The common linear factor of the homogeneous polynomials $p_3$ and $p_4$ is a simple linear factor for $p_4$.
\end{enumerate}
\end{theorem}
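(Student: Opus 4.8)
My plan is to prove the substantive implication (ii)$\Rightarrow$(i) by an explicit normalization that parallels the proof of Theorem~\ref{e6tsin}, and to obtain the converse (i)$\Rightarrow$(ii) by reading the three invariants directly off the normal form. First I would exploit $m_{S^1}(p_3)=3$: a binary cubic with a triple real root is, after a linear change of variables, a nonzero constant multiple of $x_2^3$, so we may assume $p_3=x_2^3$. Since the unique projective zero of $x_2^3$ is the line $x_2=0$, the vanishing $Res(p_3,p_4)=0$ is equivalent to $p_4(1,0)=0$, i.e. the coefficient $c_0$ of $x_1^4$ is zero, so that $x_2\mid p_4$; the requirement that the common factor $x_2$ be \emph{simple} for $p_4$ is then equivalent to $c_1\neq0$, where $c_1$ is the coefficient of $x_1^3x_2$. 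Hence after linear changes $\phi=x_2^3+c_1x_1^3x_2+(\text{terms of higher order})$ with $c_1\neq0$.

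Because $\partial_2^2\phi(0)=\partial_1\partial_2^2\phi(0)=0$ and $\partial_2^3\phi(0)\neq0$, Corollary~\ref{corolimp} applies and presents $\phi$ as $\phi=b_0(x_1)+(x_2-\psi(x_1))b_1(x_1)+B(x_1,x_2)(x_2-\psi(x_1))^3$ with $B(0,0)\neq0$ and $\psi(0)=\psi'(0)=0$. Evaluating along the curve $x_2=\psi(x_1)$, I would check that $b_1=\partial_{x_2}\phi(x_1,\psi(x_1))$ begins with $c_1x_1^3$, so that $b_1(x_1)=x_1^3\tilde b_1(x_1)$ with $\tilde b_1(0)=c_1\neq0$, while $c_0=0$ forces $b_0=\phi(x_1,\psi(x_1))$ to vanish to order five, $b_0(x_1)=x_1^5h(x_1)$; here $h,\tilde b_1,B,\psi$ lie in the finite smoothness classes produced by Corollary~\ref{corolimp}.

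The ensuing change of variables is then forced: set $y_2=(x_2-\psi(x_1))\sqrt[3]{B(x_1,x_2)}$ and $y_1=x_1\sqrt[3]{\tilde b_1(x_1)/\sqrt[3]{B(x_1,x_2)}}$, a map whose Jacobian at the origin equals $\tilde b_1(0)^{1/3}B(0,0)^{2/9}\neq0$. Under it the two middle terms collapse \emph{exactly}, since $B(x_2-\psi)^3=y_2^3$ and $(x_2-\psi)b_1=y_1^3y_2$, so that $\phi=y_2^3+y_1^3y_2+\beta$, where $\beta$ is the image of $b_0$. As $x_1=y_1\cdot(\text{unit})$, one checks $\beta=y_1^5\rho(y_1,y_2)$ for a smooth $\rho$, so the whole problem is reduced to erasing the single higher-order tail $y_1^5\rho$ from the normal form $N:=y_2^3+y_1^3y_2$.

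This last erasure is the main obstacle, and it is exactly where $E_7$ departs from the decoupled $E_6$ situation: the two leading monomials share the factor $y_2$, so the tail cannot be hidden inside a fractional power as in Theorem~\ref{e6tsin}. I would remove it by the homotopy (Moser) method, built on the membership $y_1^5\in I_{\nabla N}$, which follows from the identity $y_1^5=y_1^2\,\partial_2N-y_2\,\partial_1N$ with $\partial_1N=3y_1^2y_2$ and $\partial_2N=3y_2^2+y_1^3$. Writing $N_t=N+t\beta$ and $\beta=\rho\,(y_1^2\,\partial_2N-y_2\,\partial_1N)$, one solves $\langle\nabla N_t,X_t\rangle=-\beta$ for a vector field $X_t$ with $X_t(0)=0$ and integrates it; the time-one flow $\Phi_1$ satisfies $(N+\beta)\circ\Phi_1=N$, that is $\phi\circ\Phi_1=N$ after the previous substitution. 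Since Definition~\ref{defe7} only asks for a map fixing the origin with invertible differential, the shear appearing in the linear part of $\Phi_1$ is harmless. The genuine technical point is to run this scheme in finite smoothness, tracking the derivative losses from Corollary~\ref{corolimp}, the cube roots, and the flow, so that the construction still closes under $\phi\in C^9$. Finally, (i)$\Rightarrow$(ii) is immediate: for $N$ one has $p_3=y_2^3$ (a perfect cube, $m_{S^1}(p_3)=3$) and $p_4=y_1^3y_2$, whose resultant with $y_2^3$ vanishes and in which the common factor $y_2$ is simple; being invariants of the admissible coordinate changes, these conditions transfer to any $\phi$ right-equivalent to $N$.
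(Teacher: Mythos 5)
Your normalization up to the form $N+\beta$, with $N=y_2^3+y_1^3y_2$ and $\beta=y_1^5\rho(y_1,y_2)$, is correct and runs parallel to the paper's argument (the paper additionally performs the quadratic shear $x_2\mapsto x_2-\frac{b_0}{c_1}x_1^2$ at the outset, so that its pure term vanishes to order $6$ rather than $5$). The genuine gap is the final step, which you flag as ``the genuine technical point'' but never resolve: the Moser homotopy in the class $C^9$. The identity $y_1^5=y_1^2\,\partial_2N-y_2\,\partial_1N$ only shows $\beta\in I_{\nabla N}$, i.e.\ it solves the infinitesimal equation at $t=0$. To run the homotopy you must solve $\langle\nabla N_t,X_t\rangle=-\beta$ with $N_t=N+t\beta$ for \emph{all} $t\in[0,1]$. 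Writing $X_t=(\rho y_2,\,-\rho y_1^2)+Y_t$, the remaining equation is $\langle\nabla N_t,Y_t\rangle=-t\rho\,\bigl(y_2\,\partial_1\beta-y_1^2\,\partial_2\beta\bigr)$, whose right-hand side involves one derivative of $\rho$; iterating this scheme loses a derivative at every step and never terminates. In the $C^\infty$ or analytic category this division problem is repaired by Nakayama/preparation-theorem arguments (finite determinacy, Tougeron), but precisely those tools are unavailable for germs of finite smoothness, and the finite-smoothness case is the entire content of the theorem --- the paper itself notes that for $C^\infty$ functions the statement already follows from Tougeron's theorem. So as written, your proof of (ii)$\Rightarrow$(i) is complete only in the $C^\infty$ category, not for $\phi\in C^9$.

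For contrast, the paper removes the pure term \emph{before} the fractional-power substitution, using only the implicit function theorem, which does survive finite smoothness: after the preliminary shear raises the pure term to $x_1^6\tilde b_0(x_1)$, Lemma \ref{firstnorform} rescales $x_2=x_1^3y_2$, factors out $x_1^6$, and applies the classical IFT to produce a curve $x_2=x_1^3\varphi(x_1)$ along which $\phi$ vanishes identically; shifting $x_2$ by this curve absorbs the pure term exactly, yielding $\phi=B(x_1,x_2)x_2^3+x_2x_1^3B_1(x_1,x_2)$ with $B(0,0)\neq0$, $B_1(0,0)\neq0$, and only then are the cube roots taken. If you want to salvage your argument with minimal change, replace the Moser step by this device: first perform the quadratic shear so that your tail becomes $y_1^6\rho$ (order $5$ is not enough for the rescaling to stay bounded), then find the zero curve of $y_2^3+y_1^3y_2+y_1^6\rho$ of the form $y_2=y_1^3\varphi(y_1)$ by the IFT and shift; this needs nothing beyond $C^1$ data and closes under the $C^9$ hypothesis. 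Your direction (i)$\Rightarrow$(ii) is at the same level of detail as the paper's and is acceptable.
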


\begin{remark}
Note that $Res(p_3, p_4)=0$ if and only if the homogeneous polynomials $p_3,\, p_4$ have common zeros out of the origin.
 Therefore the Theorem \ref{e78tsin} can be formulated as following: It is necessary and sufficient for the function $\phi$ to have a singularity of type $E_7$ that the conditions $m_{S^1}(p_3) = 3$ and that the homogeneous polynomials $p_3$ and $p_4$ share a simple zero on the unit circle hold.
\end{remark}

{\sf Proof of Theorem \ref{e78tsin}.}
Actually, the assertion of the Theorem \ref{e78tsin} follows from Arnold's general singularities theory for $C^\infty$ smooth functions.
Note that due to the classical Tougeron Theorem  \cite{agvMN82} (also see \cite{tougeron})  in $C^\infty $ smooth case it is enough to prove analog of Theorem  \ref{e78tsin} for a polynomial function. Because any smooth $C^\infty$ function with a critical point of finite multiplicity can be transformed into a polynomial in a sufficiently small neighborhood of that critical point through a smooth change of coordinates.
But, we present an elementary and direct proof of Theorem \ref{e78tsin}. Suppose that the polynomials $m_{S^1}(p_3)=3$ and $p_3, p_4$  have  a common zero on the circle $S^1$. Since $m_{S^1}(p_3)=3$, then after a linear transformation the  function $\phi$ can be written as:
\begin{eqnarray*}\nonumber
\phi(x_1,x_2)=x_2^3+p_4(x_1,x_2)+p_5(x_1, x_2)+R(x_1, x_2)
\end{eqnarray*}
where
\begin{eqnarray*}\nonumber
p_4(x_1, x_2)=c_0x_1^4+c_1x_1^3x_2+c_2x_1^2x_2^2+c_3x_1x_2^3+c_4x_2^4
\end{eqnarray*}
and
\begin{eqnarray*}\nonumber
p_5(x_1, x_2)=b_0x_1^5+b_1x_1^4x_2+b_2x_1^3x_2^2+b_3x_1^2x_2^3+b_4x_1x_2^4+b_5x_2^5
\end{eqnarray*}
and $R$ is the remainder term.
 Since resultant of the polynomials are covariant of the linear group, hence the condition of Theorem \ref{e78tsin} can be written in the form $Res(p_3, p_4)=c_0^3=0$ so $c_0=0$. Since $(1, 0)$ is a simple root of $p_4$  on the unite circle $S^1$ then we have $c_1\neq0$.

First, we use change of variables  $x_1=y_1 , \, x_2=y_2-\frac{b_0}{c_1}y_1^2$. Then we obtain:
\begin{eqnarray*}\nonumber
\tilde\phi(y_1, y_2):= \phi\left(y_1, y_2-\frac{b_0}{c_1}y_1^2\right)=y_2^3+\tilde p_4(y_1, y_2)+\tilde p_5(y_1, y_2)+\tilde R(y_1, y_2),
\end{eqnarray*}
where
\begin{eqnarray*}\nonumber
p_4(y_1, y_2)=c_1y_1^3y_2+\tilde c_2y_1^2y_2^2+\tilde c_3y_1y_2^3+\tilde c_4y_2^4
\end{eqnarray*}
and
\begin{eqnarray*}\nonumber
p_5(y_1, y_2)=\tilde b_1y_1^4y_2+\tilde b_2y_1^3y_2^2+\tilde b_3y_1^2y_2^3+\tilde b_4y_1y_2^4+\tilde b_5y_2^5,
\end{eqnarray*}
and $\tilde R$ is the remainder term from the class $C^9$.
Note that we have the relations: $\partial_{y_1}^k\tilde \phi(0, 0)=0$ for $k=0, 1, \dots, 5$.

Now, we use the following linear change of variables $z_1=y_1+\frac{\tilde c_2}{3 c_1}y_2, \, z_2=y_2$ then we obtain:
\begin{eqnarray*}\nonumber
\phi_1(z_1, z_2):=\tilde\phi\left(z_1-\frac{\tilde c_2}{3 c_1}z_2,  z_2\right)=y_2^3+\tilde p_4\left(z_1-\frac{\tilde c_2}{3 c_1}z_2,  z_2\right)+\\  \tilde p_5\left(z_1-\frac{\tilde c_2}{3 c_1}z_2,  z_2\right)+\tilde R\left(z_1-\frac{\tilde c_2}{3 c_1}z_2,  z_2\right),
\end{eqnarray*}
where we have
\begin{eqnarray*}\nonumber
\tilde p_4\left(z_1-\frac{\tilde c_2}{3 c_1}z_2,  z_2\right)=c_1z_1^3z_2+\tilde c_3^1z_1z_2^3+\tilde c_4^1z_2^4
\end{eqnarray*}
and
\begin{eqnarray*}\nonumber
p_5\left(z_1-\frac{\tilde c_2}{3 c_1}z_2,  z_2\right)=\tilde b_1^1z_1^4z_2+\tilde b_2^1z_1^3z_2^2+\tilde b_3^1z_1^2z_2^3+\tilde b_4^1z_1z_2^4+\tilde b_5^1z_2^5.
\end{eqnarray*}

Since  $\phi_1\in C^9$, the equation $\partial_{z_2}^2\phi_1(z_1, z_2)=0$ has a $C^7$ solution $z_2=\psi(z_1)$ solution satisfying $\psi(0)=\psi'(0)=\psi''=0$.

In view of Corollary \ref{corolimp}, the function $\phi_1$ can be written as
$$
\phi_1(z_1, z_2)=b(z_1, z_2)(z_2-\psi(z_1))^3+(z_2-\psi(z_1))z_1^3b_1(z_1)+b_0(z_1),
$$
where $b$ is a $C^6$ smooth function with $b(0, 0)\neq0$,  $ b_1$ is a $C^7$  smooth function with $b_1(0)=c_1\not=0$ and $b_0$ is a $C^7$ smooth function.

It is clear that the following relations hold:
$
b_0(0)=b_0'(0)=\dots =b_0^{(5)}=0.
$
Consequently, the function $b_0$ can be written as $b_0(x_1)=x_1^6\tilde b_0(x_1)$ with a $C^1$ smooth function $\tilde b_0$.

\begin{lemma}\label{firstnorform}
If a $C^7$ smooth function has then form
\begin{equation}\label{bigdeg}
\phi(x_1, x_2)=b(x_1, x_2) x_2^3+x_2x_1^{3}b_1(x_1)+x_1^6 b_{0}(x_1)
\end{equation}
with $b, b_0, b_1 \in C^1$ functions with $b(0, 0)\neq0$ and $b_1(0)\neq0$ then there exists a $C^1$ smooth function $\varphi$ in a sufficiently small neighborhood of the origin $\mathbb{R}$  such that the the function
$\phi(x_1, x_2+x_1^3\varphi(x_1))$ can be written as
\begin{equation}\label{bigdeg}
\phi(x_1, x_2+x_1^3\varphi(x_1))=B(x_1, x_2) x_2^3+x_2x_1^{3}B_1(x_1, x_2),
\end{equation}
where $B, B_1$ are $C^1$ functions with $B(0, 0)\neq0, B_1(0, 0)\neq0$.
\end{lemma}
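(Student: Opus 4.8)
The plan is to remove the pure $x_1$-term $x_1^6 b_0(x_1)$ from the assumed representation
\[
\phi(x_1,x_2) = b(x_1,x_2)x_2^3 + x_2\,x_1^3 b_1(x_1) + x_1^6 b_0(x_1)
\]
by a Tschirnhaus-type shear $x_2 \mapsto x_2 + x_1^3\varphi(x_1)$, and then to regroup the outcome. Write $t := x_1^3\varphi(x_1)$ and $g(x_1,x_2) := \phi(x_1, x_2 + t)$. The crucial observation is that, once the value $g(x_1,0) = \phi(x_1,t)$ is made to vanish, every surviving monomial either is proportional to $x_2^3$ or carries a factor $x_2\,x_1^3$, so it can be absorbed into one of the two target coefficients. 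Thus everything reduces to choosing $\varphi$ so that $\phi(x_1, x_1^3\varphi(x_1)) \equiv 0$.

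I would obtain $\varphi$ from the implicit function theorem. Substituting $x_2 = x_1^3\varphi$ into the displayed form and factoring out $x_1^6$ turns the requirement $\phi(x_1,x_1^3\varphi)\equiv0$ into $G(x_1,\varphi)=0$, where
\[
G(x_1,\varphi) := x_1^3\varphi^3\, b(x_1, x_1^3\varphi) + \varphi\, b_1(x_1) + b_0(x_1).
\]
Here $G$ is $C^1$ in $(x_1,\varphi)$, one has $G(0,\varphi) = \varphi\, b_1(0) + b_0(0)$, which vanishes for $\varphi_0 := -b_0(0)/b_1(0)$ (admissible since $b_1(0)\neq0$), and $\partial_\varphi G(0,\varphi_0) = b_1(0)\neq0$. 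Hence there is a $C^1$ function $\varphi$ near $0$ with $\varphi(0)=\varphi_0$ and $G(x_1,\varphi(x_1))\equiv0$. This is the central step. The associated map $(x_1,x_2)\mapsto(x_1, x_2 + x_1^3\varphi(x_1))$ is a shear of Jacobian $1$, hence a legitimate change of variables, and by construction $g(x_1,0) = x_1^6 G(x_1,\varphi(x_1)) = 0$.

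Next I would expand $g$ directly. Using $(x_2+t)^3 = x_2^3 + 3x_2^2 t + 3x_2 t^2 + t^3$ and $t = x_1^3\varphi$, the cubic coefficient is $B(x_1,x_2) := b(x_1, x_2 + t)$, which is $C^1$ and satisfies $B(0,0) = b(0,0)\neq0$. All remaining $x_2$-bearing terms, namely $3b(x_1,x_2+t)x_2^2 t$, $3b(x_1,x_2+t)x_2 t^2$, the non-constant part of $b(x_1,x_2+t)t^3$, and $x_2 x_1^3 b_1(x_1)$, each carry the factor $x_2\,x_1^3$; collecting them gives $x_2\,x_1^3 B_1(x_1,x_2)$, while the constant-in-$x_2$ remainder is exactly $x_1^6 G(x_1,\varphi)=0$. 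Evaluating the bracketed coefficient at the origin leaves only the contribution of $x_2 x_1^3 b_1$, so $B_1(0,0) = b_1(0)\neq0$. This yields the claimed normal form $g = B\,x_2^3 + x_2\,x_1^3 B_1$.

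The algebra above is routine; the hard part will be the regularity bookkeeping for $B_1$. Every term entering $B_1$ is manifestly $C^1$ except the one produced by the $x_2$-variation of $b$, i.e.\ the Hadamard quotient $\bigl(b(x_1,x_2+t)-b(x_1,t)\bigr)/x_2 = \int_0^1 \partial_2 b(x_1, t + s x_2)\,ds$ multiplied by $x_1^6\varphi^3$. This quotient is only as smooth as $\partial_2 b$, so with the bare hypothesis $b\in C^1$ it is merely continuous. To secure $B_1\in C^1$ one must therefore spend one extra degree of smoothness on $b$: if $b$ is $C^2$ (the other pieces remaining $C^1$, which already guarantees $\varphi\in C^1$), then $\partial_2 b$ composed with the $C^1$ shift is $C^1$, the quotient is $C^1$, and hence $B_1\in C^1$. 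In the setting where the lemma is invoked the pieces are in fact far smoother (e.g.\ $b\in C^6$, $b_1,b_0\in C^7$), so this requirement is comfortably met and $B,B_1$ are genuinely continuously differentiable.
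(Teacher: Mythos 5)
Your proof is essentially the paper's own: both arguments obtain $\varphi$ by applying the implicit function theorem to the rescaled equation (your $G(x_1,\varphi)=0$ is the paper's $\phi_1(x_1,y_2)=0$, with the same non-degeneracy $\partial_\varphi G(0,\varphi_0)=b_1(0)\neq 0$ and the same initial value $-b_0(0)/b_1(0)$), then perform the shear $x_2\mapsto x_2+x_1^3\varphi(x_1)$ and regroup the expansion of $(x_2+t)^3$ so that the constant-in-$x_2$ part vanishes identically. Your concluding regularity remark is a genuine refinement rather than a defect: the paper merely asserts that the coefficient containing the Hadamard quotient of $b$ is ``smooth,'' whereas you correctly note that with $b$ only $C^1$ this term need only be continuous (e.g.\ $b=1+(x_2-x_1^3)\,|x_2-x_1^3|$, $b_1=1$, $b_0=-1-x_1^3$ produces the non-$C^1$ term $x_1^6|x_2|$ in $B_1$), so the conclusion $B_1\in C^1$ really does require one extra derivative on $b$ --- a requirement comfortably met where the lemma is invoked, since there $b\in C^6$.
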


\begin{proof}
First, we try to find a solution to the equation $\phi(x_1, x_2)=0$ with respect to $x_2$ having the form $x_2=x_1^3\varphi(x_1)$ with a smooth function $\varphi$. For this reason, we use change of variables $x_2=x_1^3y_2$. Then we have
\begin{eqnarray*}
\phi(x_1, x_1^2y_2)=x_1^6(b(x_1, x_1^2y_2)y_2^3x_1^3+y_2b_1(x_1)+b_0(x_1))=0.
\end{eqnarray*}
Let us introduce the following auxiliary function
\begin{eqnarray*}
\phi_1(x_1, y_2)=b(x_1, x_1^2y_2)y_2^3x_1^3+y_2b_1(x_1)+b_0(x_1)
\end{eqnarray*}
and consider the equation $\phi_1(x_1, y_2)=0$ with respect to $y_2$ i.g. $y_2=y_2(x_1)$ satisfying $y_2(0)=-b_0(0)/b_1(0)$. Note that by our assumptions  $b_1(0)\neq0$. Moreover, we have $\phi_1(0, y_2(0))=0$ and $\partial_2\phi_1(0, -b_0(0)/b_1(0))\neq0$.
Thus we can use the classical implicit function Theorem and obtain a smooth solution  $y_2=y_2(x_1)$  of the equation $\phi_1(x_1, y_2)=0$ satisfying $y_2(0)=-b_0(0)/b_1(0)$.
Consequently, we have
$$\phi(x_1, x_1^3y_2(x_1))\equiv0$$
in a neighborhood $(-\delta, \delta)$ of the origin, where $\delta$ is a positive number.
By use of notation $\varphi(x_1):=x_2(x_1)$ and change of variables $y_2=x_2-x_1^3\varphi(x_1)$, then we obtain
\begin{eqnarray*}
\phi(x_1, y_2+x_1^3\varphi(x_1))=b(x_1, y_2+x_1^3\varphi)(y_2+x_1^3\varphi(x_1))^3+\\ (y_2+x_1^3\varphi(x_1))x_1^{3}b_1(x_1)+ x_1^6b_0(x_1).
\end{eqnarray*}

By our assumption $\phi(x_1, x_1^3\varphi(x_1))\equiv 0$.
On the other hand the function
$$B(x_1, y_2+x_1^3\varphi(x_1))(y_2+x_1^3\varphi(x_1))^3$$
can be written as
\begin{eqnarray*}
B(x_1, y_2+x_1^3\varphi(x_1))(y_2+x_1^3\varphi(x_1))^3=B(x_1, y_2+x_1^3\varphi(x_1))y_2^3+\\ y_2x_1^3\tilde B_1(x_1, y_2)+ B(x_1, x_1^3\varphi(x_1))x_1^9\varphi^3(x_1),
\end{eqnarray*}
where $\tilde B_1$ is a smooth function satisfying $\tilde B_1(0, 0)=0$.

Similarly the function $(y_2+x_1^3\varphi(x_1))x_1^{3}b_1(x_1)$ can be written as
\begin{eqnarray*}
(y_2+x_1^3\varphi(x_1))x_1^{3}b_1(x_1)= x_1^6\varphi(x_1)b_1(x_1)+y_2x_1^3b_1(x_1).
\end{eqnarray*}
Note that
\begin{eqnarray*}
\phi(x_1, x_1^3\varphi(x_1))= B(x_1, x_1^3\varphi(x_1))x_1^9\varphi^3(x_1)+x_1^6\varphi(x_1)b_1(x_1)+b_0(x_1)x_1^6=0
\end{eqnarray*}
for any $x_1\in (-\delta, \delta)$. Thus we have
\begin{eqnarray*}
\phi(x_1, y_2+x_1^3\varphi(x_1))=B(x_1, y_2+x_1^3\varphi(x_1))y_2^3+\tilde B^1(x_1, y_2)y_2x_1^3,
\end{eqnarray*}
where $\tilde B^1$ is a $C^1$ smooth function with $\tilde B^1(0, 0)=b_1(0)\neq0$.
\end{proof}

Finally, we use the below change of variables
\begin{eqnarray*}
z_2=y_2\sqrt[3]{\tilde B(x_1, y_2)}
 \quad  z_1=x_1\sqrt[3]{\frac{\tilde B^1(x_1, y_2)}{\sqrt[3]{\tilde B(x_1, y_2)}}}.
\end{eqnarray*}
It is a diffeomorphic change of variables. In the coordinates $z$ we have
\begin{eqnarray*}
\tilde\phi(z_1, z_2)=z_2^3+z_2z_1^3.
\end{eqnarray*}
The last form is called to be a \textit{normal form} of  the function having $E_7$ type singularity.

To complete the proof of Theorem \ref{e78tsin}, we show the inverse statement: That is, if the $\phi$ function  has a type $E_7$ singularity at the origin, then we show that the above conditions are satisfied. First of all, we note that if $\phi$ is a function of two variables, the rank of the Hessian at the critical point of this function (similar co-rank) does not depend on the diffeomorphic change of variables. Hence we come to the fact that $p_k=0$ for $0\leq k\leq2$. On the other hand, arbitrary diffeomorphic mapping acts as a linear mapping on a polynomial of $p_3$. Obviously, the linear mapping does not change $m_{S^1}(p_3)$. Therefore, if necessary, using possible linear change of variables, we can assume that $$p_3(x_1,x_2)=x_2^3.$$
Note that if $p_4(1, 0)\neq0$ then the function has $E_6$ type singularity (see Theorem \ref{e6tsin}).  All that remains is to show that $p_4(x_1, x_2)$ has the form:
\begin{eqnarray*}
p_4(x_1, x_2)=c_1x_1^3x_2+c_2x_1^2x_2^2+c_3x_1x_2^3+c_4x_2^4
\end{eqnarray*} with $c_1\not=0$. It means that $Res(p_3, p_4)=0$ and the common root is simple for $p_4$. Since $p_3(x_1, x_2)=x_2^3$, then surely the common root is $(1, 0)$ is real then
$$c_0=0 \quad \text{and}\quad c_1\neq0.$$
If $\varphi(y_1, y_2)=(\varphi_1(y_1, y_2), \varphi_2(y_1, y_2))$ is a $C^1$ smooth map satisfying the condition:
\begin{eqnarray*}
\phi(\varphi(y_1, y_2))=y_2^3+ y_2y_1^3,
\end{eqnarray*}
then the linear part of $\varphi$ has the form: $$(a_{11}y_1+a_{12}y_2, a_{22}y_2)\quad\text{with}\quad a_{11}\neq0,\quad a_{22}\neq0.$$
Then for the homogeneous part of degrees $3$ and  $4$ of $\phi$,  we have
$$p_3(y_1, y_2)=p_3(a_{11}y_1+a_{12}y_2, a_{22}y_2)=y_2^3$$ and  $$p_4(y_1, y_2)=p_4(a_{11}y_1+a_{12}y_2, a_{22}y_2)=y_2y_1^3$$ respectively. Consequently, we have $Res(p_3, p_4)=0$ and $p_3$, $p_4$ have a simple common root on $S^1$.  Theorem  \ref{e78tsin}  is proved. \qed

From Theorem \ref{e78tsin} we conclude the following result.

\begin{corollary}
If thesmooth function $\phi$ has a type $E_7$ singularity at the origin, then it is written by a linear transformation as follows:
$$
\phi(x_1,x_2)=b_3(x_1,x_2)(x_2-x_1^m\omega(x_1))^3+x_2x_1^3b_1(x_1)+x_1^{k_0}b_0(x_1)
$$
where $b_3,b_1,b_0,\omega$ are smooth functions, with $b_3(0,0)\neq0,\,b_1(0)\not=0$ and $m\geq2,\,k_0\geq5$. 
\end{corollary}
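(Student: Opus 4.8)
The plan is to read the statement off directly from Theorem~\ref{e78tsin} together with the decomposition supplied by Corollary~\ref{corolimp}, so that no new machinery is needed; the entire content is a careful accounting of orders of vanishing. First I would invoke Theorem~\ref{e78tsin} to replace the hypothesis ``$\phi$ has an $E_7$ singularity'' by its equivalent algebraic form: $m_{S^1}(p_3)=3$, $Res(p_3,p_4)=0$, and the common linear factor of $p_3$ and $p_4$ is simple for $p_4$. The condition $m_{S^1}(p_3)=3$ lets me choose a nondegenerate linear change of coordinates bringing the cubic part to the normal form $p_3(x_1,x_2)=x_2^3$; since $m_{S^1}$ is a linear invariant this is legitimate. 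In these coordinates the common zero of $p_3,p_4$ on $S^1$ is $(1,0)$, and writing $p_4=c_0x_1^4+c_1x_1^3x_2+\dots$ the vanishing resultant forces $c_0=0$ while simplicity of the factor forces $c_1\neq0$, exactly as in the proof of Theorem~\ref{e78tsin}.

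Next I would verify the hypotheses of Corollary~\ref{corolimp} for the transformed $\phi$. Since $\phi$ carries no terms of order $\le 2$ and its cubic part is $x_2^3$, the relevant derivatives at the origin are $\partial_2^2\phi(0,0)=\partial_1\partial_2^2\phi(0,0)=0$ and $\partial_2^3\phi(0,0)=6\neq0$. Corollary~\ref{corolimp} then produces a solution $\psi(x_1)=x_1^m\omega(x_1)$ with $m\ge2$ of $\partial_2^2\phi=0$ and the representation
\begin{equation*}
\phi(x_1,x_2)=\phi(x_1,\psi(x_1))+\partial_{x_2}\phi(x_1,\psi(x_1))\,(x_2-\psi(x_1))+B(x_1,x_2)(x_2-\psi(x_1))^3,
\end{equation*}
with $B(0,0)\neq0$. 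Setting $b_3:=B$ already supplies the cubic term $b_3(x_1,x_2)(x_2-x_1^m\omega(x_1))^3$ with $b_3(0,0)\neq0$, so it remains only to recast the linear and $x_2$-free pieces into the advertised shape.

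The core of the argument is then the order-of-vanishing computation obtained by substituting $x_2=\psi(x_1)=O(x_1^m)$, $m\ge2$. For the coefficient of $x_2-\psi$ I would expand $\partial_2\phi=3x_2^2+\partial_2p_4+\partial_2p_5+\dots$ along $x_2=\psi$: the term $3\psi^2$ is $O(x_1^{2m})$, while $\partial_2p_4$ contributes the leading monomial $c_1x_1^3$ (its remaining monomials and $\partial_2p_5$ being $O(x_1^{4})$), whence $\partial_{x_2}\phi(x_1,\psi(x_1))=x_1^3b_1(x_1)$ with $b_1(0)=c_1\neq0$. Splitting
\begin{equation*}
\partial_{x_2}\phi(x_1,\psi)\,(x_2-\psi)=x_2\,x_1^3b_1(x_1)-\psi(x_1)\,x_1^3b_1(x_1)
\end{equation*}
produces the middle term $x_2x_1^3b_1(x_1)$ of the corollary and transfers the $x_2$-free remainder $\phi(x_1,\psi)-\psi\,\partial_{x_2}\phi(x_1,\psi)$ into the last term. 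Finally I would show this remainder vanishes to order at least five: $\phi(x_1,\psi)=\psi^3+p_4(x_1,\psi)+p_5(x_1,\psi)+\dots$ has lowest contributions $O(x_1^{3m})$, $c_1x_1^3\psi=O(x_1^{3+m})$ and the pure monomial $x_1^5$ coming from $p_5$, all of order $\ge5$, while $\psi\,\partial_{x_2}\phi(x_1,\psi)=O(x_1^{3+m})$ is likewise of order $\ge5$; hence the remainder equals $x_1^{k_0}b_0(x_1)$ for some $k_0\ge5$, completing the desired form.

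The main obstacle I anticipate is precisely this bookkeeping: one must ensure that no cancellation destroys the leading $c_1x_1^3$ in the linear coefficient (this is what forces $b_1(0)\neq0$ and separates $E_7$ from $E_6$), and that every surviving monomial in the $x_2$-free part genuinely attains order $\ge5$. A secondary point worth flagging is that, $\phi$ being only finitely differentiable, I would track the loss of derivatives incurred by Corollary~\ref{corolimp}, consistent with the $C^9$ regularity used in Theorem~\ref{e78tsin}. Finally, I would make explicit that the phrase ``by a linear transformation'' in the statement refers only to the normalization of $p_3$ and $p_4$; the factor $(x_2-x_1^m\omega(x_1))^3$ itself is furnished by the implicit function theorem through Corollary~\ref{corolimp}, not by any further linear change of variables.
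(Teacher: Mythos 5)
Your proposal is correct, and it supplies exactly the bookkeeping that the paper leaves implicit: the paper gives no separate proof, saying only that the corollary follows from Theorem \ref{e78tsin}. There is, however, a genuine and worthwhile difference between your route and the proof of that theorem. In the paper's proof of Theorem \ref{e78tsin}, Corollary \ref{corolimp} is applied only \emph{after} two further shears, one of which ($x_2=y_2-\tfrac{b_0}{c_1}y_1^2$, killing the $x_1^5$ coefficient of $p_5$) is quadratic, hence not admissible for a statement that promises a \emph{linear} transformation; in those nonlinearly related coordinates the $x_2$-free term vanishes to order $6$. You instead apply Corollary \ref{corolimp} immediately after the linear normalization $p_3=x_2^3$, $c_0=0$, $c_1\neq0$, then split $\partial_2\phi(x_1,\psi)(x_2-\psi)=x_2x_1^3b_1(x_1)-\psi(x_1)x_1^3b_1(x_1)$ and absorb the second piece, of order $3+m\geq5$, together with $\phi(x_1,\psi(x_1))$ --- which still contains the $x_1^5$ monomial of $p_5$, since no quadratic shear removed it --- into $x_1^{k_0}b_0(x_1)$. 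This is precisely why the corollary asserts only $k_0\geq5$ (rather than the order $6$ arising inside the theorem's proof), so your argument is the one that actually justifies the statement as written. Two points you should make explicit when writing it up: the factorizations $\partial_2\phi(x_1,\psi(x_1))=x_1^3b_1(x_1)$ and $\phi(x_1,\psi)-\psi\,\partial_2\phi(x_1,\psi)=x_1^{k_0}b_0(x_1)$ with smooth $b_1,b_0$ need Hadamard's lemma (or Taylor with integral remainder) applied to smooth functions vanishing to the stated orders; and the representation $\psi(x_1)=x_1^m\omega(x_1)$, $m\geq2$, with $\omega$ smooth is taken from the $C^\infty$ discussion in the proof of Lemma \ref{eqsol}, not from the statement of Corollary \ref{corolimp}, which only yields $\psi(0)=\psi'(0)=0$.
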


\section{Invariant definition of $E_8$ type singularities}

Aim of this section is to give another important result about the $E_8$ type singularities.
\begin{theorem}\label{e8tsin} Let $\phi$ be a $C^8$  smooth function defined in a neighborhood of the origin, and let the Taylor expansion of $\phi(x_1, x_2)$ be given by: $$\phi(x_1,x_2) = p_3(x_1,x_2) + p_4(x_1, x_2) +p_5(x_1, x_2)+R(x_1, x_2), $$ where $p_k (k=3, 4, 5)$ is a homogeneous polynomial function of degree  $k$. Then the following conditions are equivalents:
\begin{enumerate}
\item[(i)] The function $\phi$ has $E_8$ type singularities at the origin;
\item[(ii)]  $m_{S^1}(p_3) = 3$ and $Res(p_3, p_4) = 0$. $p_4$ shares a common root with $p_3$ on the unit circle $S^1$, with multiplicity at least two. Also, $Res(p_3, p_5) \neq 0$.
\end{enumerate}
\end{theorem}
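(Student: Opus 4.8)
The plan is to prove the implication (ii) $\Rightarrow$ (i) by a sequence of explicit changes of variables that reduce $\phi$ to the shape handled in Corollary~\ref{corolimp}, and then to finish with an inverse-function-theorem argument exactly parallel to the one used at the end of Theorem~\ref{e6tsin}; the converse (i) $\Rightarrow$ (ii) will follow from the invariance of the relevant quantities. First I would use $m_{S^1}(p_3)=3$ to bring $\phi$, after a linear transformation, to the form
\begin{equation*}
\phi(x_1,x_2)=x_2^3+p_4(x_1,x_2)+p_5(x_1,x_2)+R(x_1,x_2),
\end{equation*}
and then read off the hypotheses of (ii): since $Res(x_2^3,p_4)=c_0^3=0$ and the common root $(1,0)$ has multiplicity at least two, the coefficients of $x_1^4$ and $x_1^3x_2$ in $p_4$ vanish, i.e. $c_0=c_1=0$, so that $p_4=x_2^2(c_2x_1^2+c_3x_1x_2+c_4x_2^2)$; while $Res(x_2^3,p_5)\neq0$ forces the coefficient $\beta_0$ of $x_1^5$ in $p_5$ to be nonzero.

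The key preliminary step, which has no analogue in the $E_6$ case, is to remove the part of the mixed term that obstructs the reduction. Applying Corollary~\ref{corolimp} at this stage would give $b_0(x_1)=x_1^5\tilde b_0(x_1)$ with $\tilde b_0(0)=\beta_0\neq0$, but a computation of the solution $\psi$ of $\partial_2^2\phi(x_1,\psi)=0$ shows that $b_1$ then carries a term of order $x_1^4$ with coefficient $\beta_1-\tfrac{c_2^2}{3}$, where $\beta_1$ is the coefficient of $x_1^4x_2$ in $p_5$. Because this cross term has the factor $x_1^4$ while the pure term has $x_1^5$, it cannot be absorbed into a smooth fifth power, so I would first annihilate its order-$4$ contribution in two moves: a nonlinear shift $x_2\mapsto x_2-\tfrac{c_2}{3}x_1^2$, which eliminates $c_2x_1^2x_2^2$ and turns $p_4$ into $x_2^3(c_3x_1+c_4x_2)$ at the cost of producing an $x_1^4x_2$ term in $p_5$ with coefficient $\beta_1'=\beta_1-\tfrac{c_2^2}{3}$, followed by a linear shear $x_1\mapsto x_1+\rho x_2$ with $\rho=-\beta_1'/(5\beta_0)$, which removes that $x_1^4x_2$ term through $\beta_0(x_1+\rho x_2)^5$ while preserving $p_3=x_2^3$ and $c_2=0$. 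After these two substitutions $\psi$ vanishes to order at least $3$ and $b_1$ to order at least $5$.

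With the cross term pushed to order $\ge5$, I would now apply Corollary~\ref{corolimp} to obtain
\begin{equation*}
\phi=x_1^5\bigl(\tilde b_0(x_1)+(x_2-\psi(x_1))\tilde b_1(x_1)\bigr)+B(x_1,x_2)(x_2-\psi(x_1))^3,
\end{equation*}
where $\tilde b_0,\tilde b_1\in C^1$, $\tilde b_0(0)=\beta_0\neq0$ and $B(0,0)\neq0$. Exactly as in Theorem~\ref{e6tsin}, the $C^1$ map
\begin{equation*}
y_1=x_1\sqrt[5]{|\tilde b_0(x_1)+(x_2-\psi(x_1))\tilde b_1(x_1)|},\qquad y_2=(x_2-\psi(x_1))\sqrt[3]{B(x_1,x_2)}
\end{equation*}
has nonvanishing Jacobian at the origin, hence is a local $C^1$ diffeomorphism, and in the new coordinates $\phi=\pm y_1^5+y_2^3$; since the exponent $5$ is odd the sign is absorbed by $y_1\mapsto-y_1$, giving the normal form $y_1^5+y_2^3$ and establishing (i). For the converse I would argue as in Theorem~\ref{e78tsin}: the rank of the Hessian, the quantity $m_{S^1}(p_3)$, and the resultants $Res(p_3,p_4)$, $Res(p_3,p_5)$ together with the multiplicity of a common root are invariant under the linear part of a diffeomorphism, so evaluating them on the normal form $y_1^5+y_2^3$, for which $p_3=y_2^3$, $p_4=0$, $p_5=y_1^5$, returns precisely the conditions in (ii); whereas $p_4(1,0)\neq0$, or a merely simple common root, would instead place $\phi$ in the $E_6$ or $E_7$ class.

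I expect the main obstacle to be exactly the order-$4$ cross term: one must verify that the two preliminary substitutions genuinely annihilate its leading coefficient $\beta_1-\tfrac{c_2^2}{3}$ without reintroducing $c_2$ or lowering the order of $b_0$ below $5$, and then carry out the smoothness bookkeeping that guarantees $\tilde b_0,\tilde b_1\in C^1$ (from $b_0,b_1\in C^6$ vanishing to order $5$) for a function $\phi\in C^8$. These are the delicate points on which the whole reduction rests, and they are the genuine difference between $E_8$ and the $E_6$ case, where the pure and mixed terms already share the same power of $x_1$.
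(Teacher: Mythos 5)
Your proof of (ii) $\Rightarrow$ (i) is essentially the paper's own argument: the same linear normalization giving $p_3=x_2^3$ and $c_0=c_1=0$, the same quadratic shift $x_2\mapsto x_2-\tfrac{c_2}{3}x_1^2$ removing $c_2x_1^2x_2^2$ (and leaving the $x_1^5$-coefficient untouched), the same shear in $x_1$ removing the $x_1^4x_2$-term of $p_5$, then Corollary~\ref{corolimp} followed by a $C^1$ change of variables extracting a fifth root and a cube root, with nonvanishing Jacobian at the origin. Your computation of the obstructing coefficient $\beta_1-\tfrac{c_2^2}{3}$ is correct and only makes explicit what the paper leaves implicit.

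The genuine gap is in your converse (i) $\Rightarrow$ (ii). You assert that $Res(p_3,p_4)$, $Res(p_3,p_5)$ and the multiplicity of the common root are ``invariant under the linear part of a diffeomorphism'' and then evaluate them on the normal form, for which $p_4=0$ and $p_5=y_1^5$. But $\phi$ and the normal form are related by a \emph{nonlinear} diffeomorphism, and the Taylor pieces $p_4,p_5$ of $\phi$ are not obtained from those of the normal form by its linear part: writing $\phi=\sigma_1^5+\sigma_2^3$, where $\sigma=(\sigma_1,\sigma_2)$ is the inverse diffeomorphism with expansion $L+Q+C+\dots$, one finds $p_4=3L_2^2Q_2$ and $p_5=L_1^5+3L_2^2C_2+3L_2Q_2^2$. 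Concretely, $\phi(x_1,x_2)=(x_2+x_1^2)^3+x_1^5$ has an $E_8$ singularity, yet $p_4=3x_1^2x_2^2\neq0$ and $p_5=x_1^5+3x_1^4x_2$, neither of which is a linear image of the corresponding piece of the normal form; so ``evaluating the invariants on the normal form'' proves nothing as stated. The conclusion is still true, but for a subtler reason: the nonlinear contributions to $p_4$ are divisible by $L_2^2$ and those to $p_5$ by $L_2$, hence $c_0=c_1=0$ survives, and at the root of $L_2$ one has $p_5=L_1^5\neq0$, which is exactly $Res(p_3,p_5)\neq0$. This computation is what your argument is missing (the same gloss occurs, incidentally, in the paper's own proof of Theorem~\ref{e78tsin}). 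The paper itself takes a different route for the key condition $d_0\neq0$: assuming $d_0=0$, it reruns the reduction to obtain $\phi=b_3(x_1,x_2)x_2^3+x_2x_1^{k_1}b_1(x_1)+x_1^{k_0}b_0(x_1)$ with $k_1\geq4$, $k_0\geq6$, and derives a contradiction with the $E_8$ normal form; your $E_6$/$E_7$ exclusion handles only the conditions on $p_4$, not this one.
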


\begin{remark}
Note that $Res(p_3,p_5)\not=0$ if and only if the homogeneous polynomials $p_3,\, p_5$ have no common zeros out of the origin as we mentioned before in this note.
Therefore Theorem \ref{e8tsin} can be formulated as following: It is a necessary condition to have a singularity of type $E_8$ at the origin for the function $\phi$ are  $m_{S^1}(p_3)=3$ and the homogeneous polynomials $p_3,\, p_5$ have no common zeros out of the origin. However, the condition $Res(p_3, p_5)\not=0$ is not sufficient. Because, the function  can have singularity of type $E_6$ or $E_7$. On the other hand we can give sufficient condition in terms of multiplicity of the critical point. The function has $E_8$ type singularity, if and only if $$p_2=0,\quad  m_{S^1}(p_3)=3\quad \text{and}\quad Res(p_3, p_5)\not=0,$$ the multiplicity of the critical point at the origin is $8$ as well.
\end{remark}

{\sf Proof of  Theorem \ref{e8tsin}.}
Actually, the assertion of Theorem \ref{e8tsin} follows from Arnold's general theory for $C^\infty$ functions.
But, we present an elementary and direct proof of Theorem \ref{e8tsin}. It turns out that the $E_8$ case is simpler than the $E_7$ case. Suppose that the polynomials $m_{S^1}(p_3)=3$.
We can  use a linear linear transform and denoting the new variables  by $x$ and, without loss of generality, the  function can written as:
\begin{eqnarray*}\nonumber
\phi(x_1,x_2)=x_2^3+p_4(x_1, x_2)+p_5(x_1, x_2)+R(x_1, x_2),
\end{eqnarray*}
where
 \begin{eqnarray*}\nonumber
p_4(x_1, x_2)=c_0x_1^4+c_1x_1^3x_2+c_2x_1^2x_2^2+c_3x_1x_2^3+c_4x_2^4,\\
p_5(x_1, x_2)=d_0x_1^5+d_1x_1^4x_2+d_2x_1^3x_2^2+d_3x_1^2x_2^3+d_4x_1x_2^4+d_5x_2^5.
\end{eqnarray*}
  Since the resultant of the polynomials is a covariant of the complete linear group, hence the condition of Theorem \ref{e8tsin} can be written in the form $c_0=c_1=0$.
Besides, if $c_2\neq0$, by using the change of variables $x_2+\frac{c_2}3x_1^2\to x_2$ (note that the last change of variables does not effect to the coefficient $d_0$), we come to the follow expression 
$$p_4(x_1, x_2)= c_3x_1x_2^3+c_4x_2^4.$$
Then in addition $d_0\not=0$. If necessary we can make a linear substitution of the form $x_1+d_1/(5d_0)x_2\to x_1, \, x_2\to x_2$. Then  in the polynomial $p_5(x_1, x_2)$
 we can make the coefficient in front of $x_1^3x_2$ monomial equal to zero, that is, we have
\begin{equation}\label{p5form}
p_5(x_1, x_2)=d_0x_1^5+\tilde d_2x_1^3x_2^2+\tilde d_3x_1x_2^3+\tilde d_4x_1x_2^4+\tilde d_5 x_2^5.
\end{equation}

Thus, after a change of variables we may assume that
\begin{eqnarray*}\nonumber
\phi(x_1, x_2)=x_2^3+c_3x_1x_2^3+c_4x_2^4+p_5(x_1, x_2)+R(x_1, x_2),
\end{eqnarray*}
with $p_5$ having the form \eqref{p5form}.

Owing to Corollary \ref{corolimp} the function $\phi$ can be written as:
$$
    \phi(x_1, x_2) = b(x_1, x_2)(x_2 -  \psi(x_1))^3 +  (x_2 -  \psi(x_1))b_1(x_1) + x_1^5 b_0(x_1)
$$
where $\psi$ is a $C^6$ function with $\psi(0)=\psi'(0)=\psi''(0)=0$, $b$ is a $C^5$ smooth function with $b(0, 0)\neq0$
where $b_1$ is a $C^6$  smooth function and $b_0$ is a $C^3$ smooth function.
Note that
\begin{eqnarray*}
\partial_2 \phi(0, 0)=b_1(0)=0, \, 0=\partial_2\partial_1^l \phi(0, 0)= b_1^{(l)}(0)=0, \quad \mbox{for}\quad l=1, 2, 3, 4.
\end{eqnarray*}

Thus, we can write $b_1(x_1) = x_1^5 \tilde{b}_1(x_1)$, where $\tilde{b}_1$ is a $C^1$ smooth function.

Thus, we arrive at the following equality:
\begin{eqnarray*}
\phi(x_1,x_2)=b(x_1,x_2)(x_2-x_1^m\omega(x_1))^3+x_2x_1^5\tilde b_1(x_1)+x_1^5b_0(x_1).
\end{eqnarray*}

Finally, given that $k_1\geq5$, we take following mapping transformations
\begin{equation}\label{mapxy}
y_1=x_1(b_0(x_1))^{\frac{1}{5}}\sqrt[5]{1+(x_2-\psi(x_1))\frac{\tilde b_1(x_1)}{|b_0(x_1)|}},\quad y_2=\sqrt[3]{b(x_1,x_2)}(x_2-x_1^m\omega(x_1)).
\end{equation}
According to the inverse mapping theorem, this map has an inverse in a sufficiently small neighborhood of  the point $(0, 0)$ and the inverse is a $C^1$  differentiable. Indeed, for the Jacobian of the map we get:
\begin{eqnarray*}\nonumber
\det \frac{D(y_1, y_2)}{D(x_1, x_2)}(0, 0)= |b_0(0)|^{\frac{1}{5}}\sqrt[3]{b(0, 0)} \not=0.
\end{eqnarray*}

It is apparent to see that in the $(y_1,y_2)$ coordinates $\phi$ has the following form:
\begin{equation}\label{normalxy}
\phi(x_1(y_1,y_2),x_2(y_1,y_2))=y_2^3+y_1^5.
\end{equation}
The functions that appear as a result of the diffeomorphic mapping  \eqref{mapxy} are exactly the normal form of Arnold's $E_8$-type singularity.

Thus we have shown that $m_{S^1}(p_3)=3$ for the function $\phi$ (of course under the condition $p_k\equiv0,(k\leq2)$) if the polynomials $p_3, p_4$  have a common zero of multiplicity at least two on $S^1$, and the polynomials $p_3, p_5$  do not have a common zero on $S^1$  then the function $\phi$ has  the $E_8$ type singularity at the origin.

To complete the proof of Theorem \ref{e8tsin}, we have to show its inverse: That is, if the $\phi$ function  have a $E_8$ type singularity at the origin, then we show that the above conditions are satisfied.

First of all, we note that if $\phi$ is a function of two variables, the rank of the Hessian at the critical point of this function (similar co-rank) does not depend on the diffeomorphic change of variables. Hence we come to the fact that $p_k=0$ for $0\leq k\leq2$. On the other hand, an arbitrary diffeomorphic mapping acts as a linear mapping on a polynomial of $p_3$. A linear mapping obviously does not change $m_{S^1}(p_3)$. Therefore, if necessary using possible linear change of variables, without loss of generality, we  can assume that $$p_3(x_1,x_2)=x_2^3.$$  If $Res(p_3, p_4)\neq0$ then, by Theorem \ref{e6tsin}, we get a contradiction to condition of Theorem \ref{e8tsin} because the function $\phi$ would have $E_6$ type singularity.  On the other hand if $Res(p_3, p_4)=0$ and $p_3$ and $p_4$ have a simple root then the function $\phi$ has $E_7$ type singularity (see Theorem \ref{e78tsin}). Thus $p_3$ and $p_4$ have a common root of multiplicity $2$.

Now it remains to show that $p_5(x_1, 0)=d_0x_1^4$ and $d_0\not=0$. Assume $d_0=0$.

 Then, after a change of variables we may assume that
\begin{eqnarray*}\nonumber
\phi(x_1, x_2)=x_2^3+c_3x_1x_2^3+c_4x_2^4+p_5(x_1, x_2)+R(x_1, x_2),
\end{eqnarray*}
with $p_5$ having the form
\begin{eqnarray*}\nonumber
p_5(x_1, x_2)=d_1x_2x_1^4+d_3x_2^3x_1^2+d_4x_2^4x_1+d_5x_2^5.
\end{eqnarray*}

We now return to a proof of the Theorem \ref{e8tsin}. The proof is done by assuming the opposite. Suppose that $$p_3(x_1,x_2)=x_2^3\quad \text{and}\quad p_5(x_1,0)\equiv0.$$ Then again we take the solution $x_1^4\omega(x_1)$, which is the unique smooth solution of the equation $\partial_2^2\phi(x_1, x_2)=0$, and obtain:
$$
\phi(x_1,x_2)=b_3(x_1,x_2)(x_2-x_1^4\omega(x_1))^3+x_2x_1^{k_1}b_1(x_1)+x_1^{k_0}b_0(x_1),
$$
here $k_1\geq4$ and $k_0\geq6$. Or $x_2-x_1^4\omega(x_1)\mapsto x_2$ take the substitution
\begin{eqnarray*}
\phi(x_1,  x_2)=b_3(x_1,x_2)x_2^3+x_2x_1^{k_1}b_1(x_1)+x_1^{k_0}b_0(x_1),
\end{eqnarray*}
where $b_3(0,0)=1$ and $k_1\geq4$ and $k_0\geq6$.
It is a contradiction which completes the proof. \qed

\section*{Acknowledgments}
Authors would like to give thanks Prof. Dr. Ikromov for helpful hints and comments. 
\section*{CONFLICT OF INTEREST}
The authors declare that they have no conflicts of interest.



\end{document}